\DeclareMathOperator{\RE}{Re}
\numberwithin{equation}{section}
\theoremstyle{plain}
\newtheorem{theorem}{Theorem}[section]
\newtheorem{lemma}[theorem]{Lemma}
\newtheorem{corollary}[theorem]{Corollary}
\theoremstyle{definition}
\begin{document}\vspace*{-0.25cm}
\title[The Booth Lemniscate Starlikeness Radius]{The Booth Lemniscate Starlikeness Radius\\ for Janowski Starlike Functions}
		
\author[S. Malik]{Somya Malik}
\address{Department of Mathematics \\National Institute of Technology\\Tiruchirappalli-620015,  India }
\email{arya.somya@gmail.com}

\author[R. M. Ali]{Rosihan M Ali}
\address{School of Mathematical Sciences\\
Universiti Sains Malaysia\\
11800 USM Penang\\ Malaysia}
\email{rosihan@usm.my}

	\author[V. Ravichandran]{V. Ravichandran} 
	\address{Department of Mathematics \\National Institute of Technology\\Tiruchirappalli-620015,  India }
	\email{vravi68@gmail.com; ravic@nitt.edu}
	
	\begin{abstract} The function $G_\alpha(z)=1+ z/(1-\alpha z^2)$, \, $0\leq \alpha <1$, maps the open unit disc $\mathbb{D}$ onto the interior of a domain known as the Booth lemniscate. Associated with this function $G_\alpha$ is the recently introduced class $\mathcal{BS}(\alpha)$ consisting of normalized analytic functions $f$ on $\mathbb{D}$ satisfying the subordination $zf'(z)/f(z) \prec G_\alpha(z)$. Of interest is its connection with known classes $\mathcal{M}$ of functions in the sense $g(z)=(1/r)f(rz)$ belongs to $\mathcal{BS}(\alpha)$ for some $r$ in $(0,1)$ and all $f \in \mathcal{M}$. We find the largest radius $r$ for different classes $\mathcal{M}$, particularly when $\mathcal{M}$ is  the  class of starlike functions  of order $\beta$, or the Janowski class of starlike functions. As a primary tool for this purpose, we find the radius of the largest disc contained in  $G_\alpha(\mathbb{D})$ and centered at a certain point $a \in \mathbb{R}$.
	\end{abstract}
	
	\subjclass[2020]{30C80,  30C45; Secondary: 30C10}
	\keywords{Starlike functions; Janowski starlike functions; Booth lemniscate; subordination; radius of starlikeness}
	\thanks{ The first author is supported by the UGC-JRF Scholarship. The  second   author gratefully acknowledge support from a USM research university grant 1001.PMATHS.8011101.}
	
	\maketitle
	
	\section{Introduction}
	Let $\mathcal{A}$ be the class of functions analytic on the open unit disc $\mathbb{D}:=\{z\in\mathbb{C}:|z|<1\}$ and   normalized by  $f(0)= f'(0)-1= 0$. Further, let $\mathcal{S}$ be its  subclass consisting of univalent functions. An analytic function $f$ is \emph{subordinate} to an analytic function $g$, written  $f\prec g$, if $f(z)=g(w(z))$ for some analytic self-map $w:\mathbb{D}\to \mathbb{D}$ with $w(0)=0$.  When the superordinate function $g$ is univalent,  then $f\prec g$ if and only if $f(0)=g(0)$ and $f(\mathbb{D}) \subseteq g(\mathbb{D})$. Several important subclasses of  $\mathcal{A}$ are defined by $zf'(z)/f(z)$ and $1+zf''(z)/f'(z)$ respectively being subordinate to a function of positive real part. For an analytic function $\varphi:\mathbb{D}\to \mathbb{C}$, Ma and Minda \cite{MaMinda} gave a unified treatment on growth, distortion, covering and coefficient problems for the two subclasses
\[\mathcal{S^{*}}(\varphi):=\left \{f\in \mathcal{A}: \dfrac{zf'(z)}{f(z)} \prec \varphi(z)\right \}\] and
\[\mathcal{K}(\varphi):=\left \{f\in \mathcal{A}:1+ \dfrac{zf''(z)}{f'(z)} \prec \varphi (z)\right \}.\]
Here $\varphi$ is assumed to be univalent with positive real part, $\varphi(\mathbb{D})$ is starlike with respect to $\varphi(0)=1$, symmetric about the real axis and $\varphi'(0)>0$. If $\varphi$ has positive real part, then functions in $\mathcal{S^{*}}(\varphi)$ and $\mathcal{K}(\varphi)$ are starlike and convex respectively, and thus are univalent. Convolution theorems for some general classes were earlier investigated by Shanmugam \cite{shan} under the stronger assumption of convexity imposed on $\varphi$. Radius problems have also been investigated but only for special cases of $\varphi$.

	For $0\leq \alpha <1$,  let $G_{\alpha}:\mathbb{D}\to \mathbb{C}$ be the function defined by  $G_{\alpha}(z)=1+z/(1-\alpha z^2)$, and $\mathcal{BS}(\alpha):=\mathcal{S}^*(G_\alpha)$. This class was introduced by Kargar et al.\ \cite{booth1}. It is worth noting that $\mathcal{BS}(\alpha)$ contains non-univalent functions because $G_\alpha$ is not of positive real part. Functions belonging to the class $\mathcal{BS}(\alpha)$ are called \emph{Booth lemniscate starlike functions of order} $\alpha$. Other properties of $\mathcal{BS}(\alpha)$ have been studied in \cite{booth2,booth3}, while some closely related classes were also studied in \cite{kanas,kanas2}. Recently, Cho et al. \cite{ChoBooth} obtained some subordination and radius results for $\mathcal{BS}(\alpha)$.

 When $\varphi_{A,B}:\mathbb{D}\to\mathbb{C}$ is $\varphi_{A,B}(z)=(1+Az)/(1+Bz)$, $-1\leq B <A\leq 1$, then the class $\mathcal{S}^*(\varphi_{A,B})=:\mathcal{S}^*[A,B]$ is the well-known class of Janowski starlike functions \cite{Jano}. In particular, if   $0\leq \beta<1$, the class $\mathcal{S}^*(\beta):=\mathcal{S}^*[1-2\beta,-1]$ is the class of starlike functions of order $\beta$.  The classes $\mathcal{S}^*=\mathcal{S}^*(0)$ and $\mathcal{K}=\{f\in\mathcal{A}: zf'(z)\in\mathcal{S}^*\}$ are the classical classes of starlike and convex functions.


Let $\mathcal{M}$ be a given class of analytic functions in $\mathcal{A}$. To each  $f \in \mathcal{M}$, let
$$R_{f} = \sup \left \{r:\dfrac{zf'(z)}{f(z)} \in G_{\alpha}(\mathbb{D}),\quad |z|\leq r<1   \right\},$$
and
$$R_{\mathcal{BS}(\alpha)}(\mathcal{M}) = \inf \{R_{f}: f \in \mathcal{M}   \}.$$
The number $R_{\mathcal{BS}(\alpha)}(\mathcal{M})$ is known as the $\mathcal{BS}(\alpha)$-radius or the \emph{Booth lemniscate starlikeness radius of order} $\alpha$ for the class $\mathcal{M}$. We shall use these two terms interchangeably. Thus the function $g(z)=(1/r)f(rz)$ belongs to $\mathcal{BS}(\alpha)$ for every $r \leq R_{\mathcal{BS}(\alpha)}(\mathcal{M}).$

 In this paper, we seek to determine the $\mathcal{BS}(\alpha)$-radius $R_{\mathcal{BS}(\alpha)}(\mathcal{M})$ when $\mathcal{M}$ is the class of starlike functions of order $\beta$, or $\mathcal{M}$ is the class of Janowski starlike functions. As a primary tool, we first obtained in Section 2, the largest disc contained in $G_\alpha(\mathbb{D})$ and centered at a given point $a$, as well as the smallest disc containing $G_\alpha(\mathbb{D})$ and centered at $a$ .

 In Section 3, this result is applied to determine the Booth lemniscate starlikeness radius of order $\alpha$ for the class of starlike functions of order $\beta$. In this section too, the $\mathcal{BS}(\alpha)$-radius is also determined for the class of convex functions and the class consisting of functions $f\in \mathcal{A}$ with $zf'(z)/f(z)$ lying in the half-plane $\{w: \operatorname{Re} w<\beta\}$, \, $1<\beta<4/3$.

 In Section 4, conditions on $A$ and $B$ are determined that will ensure the Janowski functions $f \in \mathcal{S}^*[A,B]$ also belong to the class $\mathcal{BS}(\alpha)$. When these conditions are not met, we find  the Booth lemniscate starlikeness radius for $\mathcal{S}^*[A,B]$. Booth lemniscate starlikeness radius is also deduced for other related classes.



\section{Preliminaries}
Let $\mathbb{D}(a;r):=\{z\in\mathbb{C}:|z-a|<r\}$ be the open disc of radius $r$ centered at $z=a$. If $f\in \mathcal{M}$, then $zf'(z)/f(z)\in 	 \mathbb{D}(a_f(r);c_f(r))$ for $r$ sufficiently small. Thus the $\mathcal{BS}(\alpha)$-radius for the class $\mathcal{M}$ is found by determining the largest disc so that $\mathbb{D}(a_f(r);c_f(r))\subseteq G_{\alpha}(\mathbb{D})$.

For this purpose, a key objective in this section is to find the radius $r_a$ of the largest disc $\mathbb{D}(a;r_a)$ contained in $G_\alpha(\mathbb{D})$ and centered at a given point $a$. We also find the radius $R_a$ of the smallest disc $\mathbb{D}(a;R_a)$ containing $G_\alpha(\mathbb{D})$ centered at $a$.  Since the range of $zf'(z)/f(z)$ contains the point $1$ for any $f\in \mathcal{A}$, we may assume that the center $a$ of the disc satisfy the inequality
   \[ \frac{1-2\alpha}{2-2\alpha}< a < \frac{3-2\alpha}{2-2\alpha}.\]
This will ensure that the disc $\mathbb{D}(a;r_a)$ contains the point $w=1$.
		

In \cite[Lemma 3.4]{ChoBooth}, Cho et al.\ found the largest disc centered at $w=1$ contained in $G_\alpha(\mathbb{D})$ and the smallest disc centered at $w=1$ containing $G_\alpha(\mathbb{D})$. Specifically, they showed that 	
	\begin{equation*}
		\mathbb{D}(1;1/(1+\alpha)) \subseteq G_{\alpha}(\mathbb{D}) \subseteq   \mathbb{D}(1; 1/(1-\alpha)).
	\end{equation*}
This readily follows since
\[ \frac{1}{1+\alpha} \leq |G_\alpha(e^{it})-1|=\frac{1}{|1-\alpha e^{2it}|} \leq \frac{1}{1-\alpha}. \]
Here, we compute the radii of these two discs when the centers are located at an arbitrary point $a \in  \mathbb{R}\cap G_{\alpha}(\mathbb{D})$.
	
	\begin{lemma}\label{ralemma} Let $0\leq \alpha<1$ and $(1-2\alpha)/(2-2\alpha)<a<(3-2\alpha)/(2-2\alpha)$. Then the following inclusions hold:

		\begin{equation*}
			\mathbb{D}(a;r_{a})\subseteq G_{\alpha}(\mathbb{D})
			\subseteq \mathbb{D}(a;R_{a}),
		\end{equation*}
where  $r_a$ and $R_a$  are  given  by
\begin{align*}
			r_{a} &=
			\begin{dcases}
				a-1+\dfrac{1}{1-\alpha},  &   \frac{1-2\alpha}{2-2\alpha}< a
				\leq 1-\frac{4\alpha }{(1-\alpha)(1+6\alpha+\alpha^2)},\\
				\sqrt{s(\alpha, a)}, &
            1-\frac{4\alpha }{(1-\alpha)(1+6\alpha+\alpha^2)}
				<  a< 1+\frac{4\alpha }{(1-\alpha)(1+6\alpha+\alpha^2)},\\
				1-a+\dfrac{1}{1-\alpha},  &
				1+\frac{4\alpha }{(1-\alpha)(1+6\alpha+\alpha^2)}\leq  a <\frac{3-2\alpha}{2-2\alpha},
			\end{dcases} \intertext{and}
			R_{a}&=\begin{dcases}
				1-a+\dfrac{1}{1-\alpha},
				&  \frac{1-2\alpha}{2-2\alpha}<a\leq 1,\\
				a-1+\dfrac{1}{1-\alpha}, &   1\leq a< \frac{3-2\alpha}{2-2\alpha},
			\end{dcases}
			\intertext{with}
			s(\alpha, a) &=\dfrac{\sqrt{\alpha [\alpha- (1-a)^2 (1-\alpha ^2)^2]}
				+ \alpha (1+2 (1+\alpha)^2(1-a)^2)}{2\alpha (1+\alpha)^2}, \quad \alpha\neq 0.
		\end{align*}
	\end{lemma}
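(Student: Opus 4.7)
The strategy is to reduce the problem to the optimization of a one-variable function on the boundary. Setting $z = e^{it}$ and $u = \cos t \in [-1,1]$, I would first rationalize $G_\alpha(e^{it})-1 = e^{it}/(1-\alpha e^{2it})$ by its conjugate to obtain
\begin{equation*}
\RE(G_\alpha(e^{it})-1) = \frac{(1-\alpha)u}{D(u)}, \qquad \IM(G_\alpha(e^{it})-1) = \frac{(1+\alpha)\sqrt{1-u^2}}{D(u)},
\end{equation*}
where $D(u) := (1+\alpha)^2 - 4\alpha u^2 > 0$ on $[-1,1]$. Using the identity $(1-\alpha)^2 u^2 + (1+\alpha)^2(1-u^2) = D(u)$, the squared distance collapses to the rational function
\begin{equation*}
F(u) := |G_\alpha(e^{it})-a|^2 = (1-a)^2 + \frac{2cu+1}{D(u)}, \qquad c := (1-a)(1-\alpha),
\end{equation*}
so that $r_a^2 = \min_{[-1,1]} F$ and $R_a^2 = \max_{[-1,1]} F$.

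The equation $F'(u)=0$ then reduces to the quadratic
\begin{equation*}
4\alpha c u^2 + 4\alpha u + c(1+\alpha)^2 = 0,
\end{equation*}
whose two roots have product $(1+\alpha)^2/(4\alpha) \geq 1$ and sum $-1/c$; hence both share a sign opposite to $c$, and at most one of them, call it $u_\ast$, can lie in the open interval $(-1,1)$. Substituting $u = \mp 1$ into the quadratic yields exactly the thresholds $a = 1 \mp 4\alpha/[(1-\alpha)(1+6\alpha+\alpha^2)]$ of the statement, so $u_\ast \in (-1,1)$ precisely when $a$ lies in the middle range. A direct computation gives the endpoint values $F(1) = (1-a+1/(1-\alpha))^2$ and $F(-1) = (a-1+1/(1-\alpha))^2$.

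In the middle range, I would combine the identity $(2cu_\ast+1)/D(u_\ast) = -c/(4\alpha u_\ast)$ (which falls out of $F'(u_\ast)=0$) with $c/u_\ast = -4\alpha(cu_\ast+1)/(1+\alpha)^2$ (obtained by dividing the quadratic by $u_\ast$) to deduce $F(u_\ast) = (1-a)^2 + (cu_\ast+1)/(1+\alpha)^2$. Substituting the interior root $u_\ast = (-1+\sqrt{1-c^2(1+\alpha)^2/\alpha})/(2c)$ and rationalizing the resulting radical reduces this to $F(u_\ast) = s(\alpha,a)$. That $u_\ast$ is a \emph{minimum} and not a maximum follows from inspecting the sign of the numerator of $F'(-1)$, namely $2c(1+6\alpha+\alpha^2) - 8\alpha$: in the middle range this sign forces $F$ to decrease from the appropriate endpoint into $u_\ast$, so $u_\ast$ is the global minimum of $F$ on $[-1,1]$. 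In the two outer ranges the same sign analysis shows $F$ has no interior critical point and is monotone on $[-1,1]$, so $r_a$ and $R_a$ are read off directly from the smaller and larger of $\sqrt{F(\pm 1)}$, producing the stated piecewise formulae after comparing $1-a$ with $a-1$. The main technical obstacle will be the algebraic reduction of $F(u_\ast)$ to $s(\alpha,a)$: the critical-point identities above bypass direct substitution of the messy quadratic roots, but care is still required in choosing the correct sign of the radical so that $u_\ast$ is the root actually lying in $(-1,1)$ in each of the cases $c>0$ and $c<0$.
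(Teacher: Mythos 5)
Your proposal is correct and follows essentially the same route as the paper: parametrize $\partial G_\alpha(\mathbb{D})$, write the squared distance from $(a,0)$ as the rational function $F(\cos t)=(1-a)^2+\bigl(2(1-a)(1-\alpha)u+1\bigr)/\bigl((1+\alpha)^2-4\alpha u^2\bigr)$, locate the critical points via the same quadratic with root product $(1+\alpha)^2/(4\alpha)\geq 1$, and split into cases according to whether the single admissible root lies in $(-1,1)$, which yields exactly the thresholds $1\mp 4\alpha/[(1-\alpha)(1+6\alpha+\alpha^2)]$. The only point worth adding is an explicit separate treatment of the degenerate cases $\alpha=0$ and $a=1$ (where your quadratic collapses), as the paper does.
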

	
\begin{proof} As noted earlier, the result for  $a=1$ was proved in \cite[Lemma 3.4]{ChoBooth}.   If $\alpha=0$, then  $G_{0}(z)=1+z$. In this case, readily $r_a = a$ and $R_a = 2-a$ for $1/2<a\leq 1$. Also, for $1\leq a< 3/2$, it is readily seen that $	r_a = 2-a$ and $ R_a = a $.

Thus, assume next that $\alpha \neq 0$ and   $a\neq 1$. The boundary $\partial G_{\alpha}(\mathbb{D})$ of the image of the unit disc $\mathbb{D}$ in parametric form is given by
\[G_{\alpha}(e^{it})=1+\frac{e^{i t}}{1-\alpha  e^{2 i t}}= 1+\frac{(1-\alpha ) \cos t+i (1+\alpha ) \sin t}{1+\alpha ^2-2 \alpha  \cos (2 t) } .\]
The result is proved by showing the minimum and maximum distance from the point $(a,0)$ to the point on the boundary $\partial G_{\alpha}(\mathbb{D})$  are respectively $r_a$ and $R_a$.

Thus consider the function
		\begin{align}\label{eqn23a} H(\cos t)&=\notag
			\left(1-a+\frac{(1-\alpha ) \cos t }{1+\alpha ^2-2 \alpha  \cos (2 t) }\right)^2+ \left( \frac{  (1+\alpha ) \sin t}{1+\alpha ^2-2 \alpha  \cos (2 t) }\right)^2\\
			&= (1-a)^2+\dfrac{1+2(1-a)(1-\alpha) \cos(t)}{1+\alpha ^2-2\alpha \cos(2t)},
			\intertext{that is,}
			\label{eqn23}
			H(x) &= (1-a)^2+\dfrac{1+2(1-a)(1-\alpha)x}{(1+\alpha)^2-4\alpha x^2},\quad  x=\cos t\in [-1,1].
			\intertext{A computation using \eqref{eqn23} shows that } \notag
			H'(x) & =  \frac{2 (1-a) (1-\alpha )}{(\alpha +1)^2-4 \alpha  x^2}+\frac{8 \alpha  x (2 (1-a) (1-\alpha ) x+1)}{\left((\alpha +1)^2-4 \alpha  x^2\right)^2}\\  \label{eqn23b}
			& = \frac{8\alpha  (1-\alpha) (1-a)(x-x_1)(x-x_2) }{\left((\alpha +1)^2-4 \alpha  x^2\right)^2},
			\intertext{where $x_1$ and $x_2$ are the two zeros of $H'(x)$. These are the zeros of the polynomial } \label{eqn2p}
			& 4\alpha(1-\alpha)(1-a)x^2+4\alpha x +(1-\alpha)(1+\alpha)^2(1-a)=0
			\intertext{given by}\notag
			x_{1}& =-\dfrac{\alpha+\sqrt{\alpha(\alpha - (1-a)^2 (1-\alpha^2)^2)}}{2\alpha(1-a)(1-\alpha)}, \\
			\intertext{and}\notag
			x_{2}& =-\dfrac{\alpha-\sqrt{\alpha(\alpha - (1-a)^2 (1-\alpha^2)^2)}}{2\alpha(1-a)(1-\alpha)}.
		\end{align}
		
The zeros $x_1$ and $x_2$ satisfy  $x_1x_2=(1+\alpha)^2/(4\alpha)\geq 1$. Further, $x_1$, $x_2$  are real if   $\alpha$ and $a$ satisfy
		$\alpha \geq (1-a)^2 (1-\alpha^2)^2$, or equivalently, whenever
		\begin{equation*}
			|a-1|\leq \dfrac{\sqrt{\alpha}}{1-\alpha ^2}.
		\end{equation*}
		
		
The following notations are introduced to give greater clarity to the proof. Let
		\begin{align*}
			\alpha_0 & := 1-\frac{\sqrt{\alpha}}{1-\alpha^2}, \quad
			\alpha_1   := 1-\frac{4\alpha }{(1-\alpha)(1+6\alpha+\alpha^2)},\\
			\tilde{\alpha}_0&:= 1+\frac{\sqrt{\alpha}}{1-\alpha^2},
			\quad
			\tilde{\alpha}_1 :=1+\frac{4\alpha }{(1-\alpha)(1+6\alpha+\alpha^2)}.
		\end{align*}
A little computations shows that $ x_1  <-1$ for $\alpha_0 <a<1$, while $x_1  >1 $ for  $1<a<\tilde{\alpha}_0$. Similarly, $x_2<0$ for $a<1$; indeed, $x_2<-1$ for $\alpha_0 \leq a <\alpha_1$, and  $-1\leq x_2\leq 0$ for $\alpha_1 \leq a<1 $. Also, $x_2>0$ for $a>1$; indeed,  $0\leq x_2\leq 1$ for $ 1<a\leq\tilde{\alpha}_1$ and $x_2>1$ for $\tilde{\alpha}_1 < a \leq \tilde{\alpha}_0$. These observations together with \eqref{eqn23b} will be helpful in the following cases.
		
		Case (i). If $\alpha_0 \leq a \leq \alpha_1$, it follows that the function $H$ is increasing and therefore
		\[r_a =\sqrt{H(-1)}=a-1+\dfrac{1}{1-\alpha}, \quad \text{and}\quad R_a =\sqrt{H(1)}=1-a+\dfrac{1}{1-\alpha}.\]
		
		Case (ii). If $\alpha_1  < a<1$, then $H'(x)<0$ for $x<x_2$, while $H'(x)>0$ for $x>x_2$. Thus, $x_2$ is the minimum point.  Since $a<1$, the maximum of $H$ occurs at $x=1$. Therefore,
\[r_a =\sqrt{H(x_2)}\quad \text{and}\quad R_a =\sqrt{H(1)}.\]
Note that
		\begin{align*}
			\sqrt{H(x_2)}  &=\sqrt{\dfrac{\sqrt{\alpha [\alpha- (1-a)^2 (1-\alpha ^2)^2]}+\alpha (1+2 (1+\alpha)^2(1-a)^2)}{2\alpha (1+\alpha)^2}}.
		\end{align*}
		
		Case (iii). If $ 1<a< \tilde{\alpha}_1$, then  $H'(x)<0$ for $x<x_2$, while $H'(x)>0$ for $x>x_2$. Thus, $x_2$ is the minimum point. Since $a>1$, the function $H$ attains its maximum at $x=-1$ so that
		\[r_a =\sqrt{H(x_2)}\quad \text{and}\quad R_a =\sqrt{H(-1)}.\]
		
		Case (iv). If  $\tilde{\alpha}_1 \leq  a \leq \tilde{\alpha}_0$,  then the function  $H$ is  decreasing, whence
		\[r_a =\sqrt{H(1)}\quad \text{and}\quad R_a =\sqrt{H(-1)}. \]

It remains next to consider the range
\[|a-1|> \sqrt{\alpha}/(1-\alpha ^2).\]
In this case, $H'$ is non-vanishing in $[-1,1]$. Since
		\begin{equation*}
			H'(0)=\frac{8\alpha  (1-\alpha) (1-a)x_1 x_2}{(\alpha +1)^4},
		\end{equation*}
and \eqref{eqn2p} yields $x_1 x_2= (1+\alpha)^2/(4\alpha) >0$, it follows that  $H'(0)<0$ for $a>1$, while $H'(0)>0$ for $a<1$. Since $H'$ is non-vanishing, we deduce for $x\in[-1,1]$ that $H'(x)<0$ whenever $a>1$, and $H'(x)>0$ for $a<1$. Therefore, for $a>1$,
		\[ r_a=\min\sqrt{H(x)}= \sqrt{H(1)} \quad \text{and}\quad R_a=\max\sqrt{H(x)} = \sqrt{H(-1)}.\]

Similarly,  for  $a<1$,
\[r_a=\ \sqrt{H(-1)}\quad \text{and}\quad R_a=\sqrt{H(1)}.\]
The results now follow because
		\begin{align*}
			\sqrt{H(-1)}& = a-1+\dfrac{1}{1-\alpha}, \quad \text{and}\quad
			\sqrt{H(1)}  =\ 1-a+\dfrac{1}{1-\alpha}. \qedhere
		\end{align*}		
\end{proof}

It is worth noting that the assumption in Lemma~\ref{ralemma} on the center $a$ ensures that the disc $\mathbb{D}(a;r_{a})$ contains the point $w=1$.

\section{Starlike functions of order $\beta$}

 A function $f\in \mathcal{A}$ is \emph{starlike} if $tf(z) \in f(\mathbb{D})$ whenever $0 \leq t \leq 1.$ Analytically, this is equivalent to the condition $\RE(zf'(z)/f(z))>0$ for all $z\in \mathbb{D}$. A generalization is a function $f\in \mathcal{A}$ satisfying $\RE(zf'(z)/f(z))>\beta$ for $z\in \mathbb{D}$, where $0\leq \beta <1$. This function is known as \emph{starlike of order }$\beta$, and the class consisting of such functions is denoted by $\mathcal{S}^*(\beta)$. In terms of subordination, $f\in \mathcal{S}^*(\beta)$ is subordinate to the function $(1+(1-2\beta)z)/(1-z)$. It is readily seen that the function
\begin{equation}\label{extremal}
k_{\beta}(z)=\dfrac{z}{(1-z)^{2-2\beta}}.
\end{equation}
satisfies the equation  $zf'(z)/f(z)=(1+(1-2\beta)z)/(1-z)$. This function $k_\beta$ is called the generalized Koebe function. It serves as the extremal function for the radius problem considered in the next theorem.


\begin{theorem}\label{th1}
Let $0<\alpha<1$ and $0\leq \beta <1$. If $f\in \mathcal{S}^*(\beta)$, then $f$ is Booth lemniscate starlike of order $\alpha$ in the disk of radius

\begin{align}\label{eqn31}
			R_{\mathcal{BS}(\alpha)}(\mathcal{S^{*}(\beta)})
			&=\begin{dcases}				
\dfrac{2\sqrt{\alpha}}{(1+\alpha)\sqrt{ 1+16\alpha (1-\beta)^2}},
				& \ 0\leq \beta<\max\left\{0;  \dfrac{9\alpha-1}{8\alpha}\right\}, \\
				\dfrac{1}{1+2(1-\alpha)(1- \beta)}, & \  \max\left\{0;   \dfrac{9\alpha-1}{8\alpha}\right\}\leq   \beta< 1.
			\end{dcases}
		\end{align}
	\end{theorem}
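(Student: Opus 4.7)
The plan is to reduce the radius problem to a disc-inclusion question and apply Lemma~\ref{ralemma}. For $f\in\mathcal{S}^*(\beta)$, the function $p(z)=zf'(z)/f(z)$ is subordinate to $(1+(1-2\beta)z)/(1-z)$, which is a Möbius transformation. A standard calculation (writing $\zeta=(w-1)/(w+1-2\beta)$ and imposing $|\zeta|=r$) shows that $p$ maps the closed disc $|z|\leq r$ into the closed disc
\[
\left|w-a(r)\right|\leq c(r),\qquad a(r)=\frac{1+(1-2\beta)r^2}{1-r^2},\quad c(r)=\frac{2(1-\beta)r}{1-r^2},
\]
with equality attained on the boundary by the generalized Koebe function $k_\beta$ of \eqref{extremal}. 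Hence $f$ is Booth lemniscate starlike of order $\alpha$ in $|z|\leq r$ precisely when this disc lies in $G_\alpha(\mathbb{D})$, and by Lemma~\ref{ralemma} this is equivalent to $c(r)\leq r_{a(r)}$.

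Next I observe that $a(r)-1=2(1-\beta)r^2/(1-r^2)\geq0$, so only the two cases of Lemma~\ref{ralemma} for $a\geq 1$ apply. The critical radius is therefore the smallest positive $r$ such that $c(r)=r_{a(r)}$, and the dichotomy in~\eqref{eqn31} will correspond to whether this meeting occurs in the linear regime $a(r)\geq\tilde{\alpha}_1$ or in the radical regime $1\leq a(r)\leq\tilde{\alpha}_1$. In the linear case the equation $c(r)=1-a(r)+1/(1-\alpha)$ simplifies dramatically:
\[
\frac{2(1-\beta)r}{1-r^2}+\frac{2(1-\beta)r^2}{1-r^2}=\frac{1}{1-\alpha}
\ \Longleftrightarrow\ \frac{2(1-\beta)r}{1-r}=\frac{1}{1-\alpha},
\]
yielding $r=1/[1+2(1-\alpha)(1-\beta)]$, which is the second branch of \eqref{eqn31}.

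The more delicate case is $c(r)^2=s(\alpha,a(r))$. Here I would substitute $a(r)$ into $s(\alpha,a)$, clear the square root by isolating it and squaring, and exploit the algebraic miracle that $(1-a(r))^2(1-\alpha^2)^2$ and the denominator $2\alpha(1+\alpha)^2$ combine so that $r^2(1-r^2)$ cancels; the resulting equation reduces to a quadratic in $r^2$ whose positive root is $r=2\sqrt{\alpha}/[(1+\alpha)\sqrt{1+16\alpha(1-\beta)^2}]$. I expect this algebraic reduction to be the main obstacle; the cleanest route is probably to parametrise by $t=1-\beta$ and verify that both sides become polynomials of matching degree in $t$.

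Finally I would determine the transition by equating the two candidate radii; setting
\[
\frac{1}{1+2(1-\alpha)(1-\beta)}=\frac{2\sqrt{\alpha}}{(1+\alpha)\sqrt{1+16\alpha(1-\beta)^2}}
\]
squares to $[8\alpha(1-\beta)-(1-\alpha)]^2=0$, giving $\beta=(9\alpha-1)/(8\alpha)$ (which is the threshold in \eqref{eqn31}, taken as $0$ whenever $\alpha\leq 1/9$). One then checks that for $\beta$ below this threshold the value $a(r)$ at the critical $r$ lies in $[1,\tilde{\alpha}_1]$, and above it in $[\tilde{\alpha}_1,(3-2\alpha)/(2-2\alpha))$, confirming the right branch of Lemma~\ref{ralemma} is used in each case. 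Sharpness in both branches follows from the Koebe function $k_\beta$: since $p(z)=(1+(1-2\beta)z)/(1-z)$ maps $|z|=r$ onto the full boundary of the disc $|w-a(r)|=c(r)$, at the critical radius this image circle is internally tangent to $\partial G_\alpha(\mathbb{D})$, so $k_\beta$ fails to be Booth lemniscate starlike in any larger disc.
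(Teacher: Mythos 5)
Your proposal is correct and follows essentially the same route as the paper: the known disc $\mathbb{D}(a_f(r);c_f(r))$ for $zf'/f$, Lemma~\ref{ralemma} with $a_f(r)\geq 1$, the two equations $c=\sqrt{s(\alpha,a)}$ and $c=1-a+1/(1-\alpha)$ for the two branches, and sharpness via the generalized Koebe function $k_\beta$. The only (harmless) divergences are that you locate the threshold $\beta=(9\alpha-1)/(8\alpha)$ by equating the two candidate radii and observing the perfect square $[8\alpha(1-\beta)-(1-\alpha)]^2$, where the paper compares each candidate with the auxiliary radius $\rho_1$ at which $a_f(r)$ crosses $1+4\alpha/[(1-\alpha)(1+6\alpha+\alpha^2)]$, and that you argue sharpness by tangency of the image circle with $\partial G_\alpha(\mathbb{D})$ rather than by the paper's explicit computation of the touching point $x_0=\cos t$.
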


	\begin{proof} It is known (see \cite{ravi}) that functions $f\in \mathcal{S^{*}(\beta)}$ satisfy
		\begin{equation*}
			\left|\dfrac{zf'(z)}{f(z)}-\dfrac{1+(1-2\beta)r^2}{1-r^2}\right| \leq \dfrac{2(1-\beta)r}{1-r^2}, \quad |z|\leq r<1.
		\end{equation*}
		Thus $zf'(z)/f(z)\in 	\mathbb{D}(a_f(r);c_f(r))$ where
\begin{equation}\label{eqn32x} a_f(r):=\dfrac{1+(1-2\beta)r^2}{1-r^2} \quad \text{ and }\quad c_f(r):= \dfrac{2(1-\beta)r}{1-r^2}.
\end{equation}
		
We wish to find $\rho$ so that $\mathbb{D}(a_f(\rho);c_f(\rho))\subseteq G_{\alpha}(\mathbb{D})$ for every $f\in \mathcal{S^{*}(\beta)}$. Since $a_f'(r)>0$, we note that $a_f(r)\geq 1$. Now Lemma \ref{ralemma} shows that
		the disc $\mathbb{D}(a_f(r);c_f(r))\subset G_\alpha(\mathbb{D})$ provided

		\begin{align}\label{eqn33a}
			c_f(r)&=\begin{dcases}
				\sqrt{s(\alpha,a)}, &   a_f(r)<1+\frac{4\alpha }{(1-\alpha)(1+6\alpha+\alpha^2)},\\
				1-a_f(r)+\dfrac{1}{1-\alpha},  & \
				1+\frac{4\alpha }{(1-\alpha)(1+6\alpha+\alpha^2)}\leq a_f(r),
			\end{dcases}
			\intertext{where }\label{saf}
			s(\alpha,a_f(r))&=\dfrac{\sqrt{\alpha [\alpha- (1-a_f(r))^2 (1-\alpha ^2)^2]}
				+ \alpha (1+2 (1+\alpha)^2(1-a_f(r))^2)}{2\alpha (1+\alpha)^2}.
		\end{align}

Let us write
\begin{alignat*}{3}
\rho_0 : =	\dfrac{2\sqrt{\alpha}}{(1+\alpha)\sqrt{ (1+16\alpha (1-\beta)^2)}}
 &  \quad\text{ and }\quad &
\tilde{\rho}_0 & :=\dfrac{1}{1+2(1-\alpha)(1- \beta)}.
\end{alignat*}
The  number $\rho_0<1$ is  the positive  root of the equation
		$c_f(r)=\sqrt{s(\alpha,a_f(r))}$ given by \eqref{saf}. Indeed, this equation has the form
\[(2\alpha (1+\alpha)^2(c_f(r)^2-(1-a_f(r))^2))^2= \alpha^2- \alpha (1-\alpha ^2)^2(1-a_f(r))^2, \]
which upon solving and replacing $a_f$ and $c_f$ by the expressions given by \eqref{eqn32x}, yields the solution $\rho_0$.

Also, the number		$\tilde{\rho}_0<1$ is the root of the equation
		\[c_f(r)=1-a_f(r)+\dfrac{1}{1-\alpha}.  \]
Further,
		\[\rho_1:= \dfrac{\sqrt{2\alpha}}{\sqrt{2\alpha+(1-\alpha)(1-\beta)(1+6\alpha+\alpha^2)}}<1\]
is  the positive root of the equation
		\[\dfrac{(1-\beta)r^2}{1-r^2}=\frac{2\alpha }{(1-\alpha)(1+6\alpha+\alpha^2)},\]
obtained from rewriting  the equation \[a_f(r)= 1+\frac{4\alpha }{(1-\alpha)(1+6\alpha+\alpha^2)}.\]

Let us also write
		\begin{alignat*}{3}
			\beta_0 & := 1- \dfrac{1-\alpha}{8\alpha}=\dfrac{9\alpha -1}{8\alpha}, & \quad  \text{   }\quad
            & \tilde{\beta}_0  := 1-\dfrac{1-\alpha}{2(1+\alpha)^2}=\dfrac{1+5\alpha+2\alpha ^2}{2(1+\alpha)^2}.
		\end{alignat*}
Then, $\rho_0=\tilde{\rho_0}$ if $\beta=\beta_0$. Since $0<\alpha<1$, a calculation shows  that $\beta_0<\tilde{\beta}_0$.		

For $\alpha<1/9$, or equivalently for $\beta_0<0$, we shall show that  $R_{\mathcal{BS}(\alpha)}=\tilde{\rho}_0$ for all $0\leq \beta<1$. When $\alpha\geq 1/9$, or equivalently $\beta_0\geq 0$, we shall show that  $R_{\mathcal{BS}(\alpha)}=\rho_0$ for $ 0\leq \beta<\beta_0$, while $R_{\mathcal{BS}(\alpha)}=\tilde{\rho}_0$ for $\beta_0\leq \beta<1$. Thus there are two cases to consider: $ 0\leq \beta< \beta_0$, and $  \beta_0\leq \beta<1$.

%
		
Case (i). Let $ 0\leq \beta< \beta_0$.  A little calculations shows that the  inequality  $\rho_0<  \rho_1$ is equivalent to
		\begin{equation}\label{eqn35}
			(2(1+\alpha)^2 \beta-(1+5\alpha +2\alpha ^2))(1-9\alpha+8\alpha\beta) >0.
		\end{equation}
Inequality \eqref{eqn35} holds  if and only if
		\begin{align*}
			\beta <\min\left\{\beta_0,\tilde{\beta}_0\right\}= \beta_0, \quad
			\text{or}\quad
			\beta &> \max\left\{\beta_0,\tilde{\beta}_0\right\}=\tilde{\beta}_0.
		\end{align*}
In this case, $\rho_0\leq \rho_1$ and
		\[ a_f(\rho_0)\leq a_f(\rho_1) =1+\frac{4\alpha }{(1-\alpha)(1+6\alpha+\alpha^2)}.\]
From Lemma~\ref{ralemma}, it follows that $\mathbb{D}(a_f(\rho_0); c_f(\rho_0))\subset G_\alpha(\mathbb{D})$, whence the $\mathcal{BS}(\alpha)$-radius for $\mathcal{S}^*(\beta)$ is at least $\rho_0$.
		
To validate sharpness of  $ {\rho}_0$, we  consider the generalized Koebe function $k_\beta$ given by \eqref{extremal}. We shall   show the existence of   a point on $|z|={\rho}_0$ that is mapped to a point on $\partial G_\alpha(\mathbb{D})$. In other words, we prove that there is some $t$ such that the image of the point $z={\rho}_0\mathit{e} ^{i  t}$ under the map $zk_\beta'(z)/k_\beta(z)$ belongs to  $G_{\alpha}(\mathit{e}^{i  t})$.

For this purpose, let us write $G_{\alpha}(\mathit{e}^{i  t})=u(t)+i  v(t)$. Then $u$ and $v$ satisfy the equation
		\begin{equation}\label{eqn36}
			((u-1)^2+v^2)^2= \left(\dfrac{u-1}{1-\alpha}\right)^2+ \left(\dfrac{v}{1+\alpha}\right)^2.
		\end{equation}
The representation of the function $zk_\beta'(z)/k_\beta(z)$  at $z= \rho_0\mathit{e} ^{i  t}$ in Cartesian coordinates is
		\begin{align*}
			\dfrac{zk_\beta'(z)}{k_\beta(z)}&=\dfrac{1+(1-2\beta)z}{1-z}
			=\frac{1-\overline{z}+(1-2\beta)z-(1-2\beta)|z|^2}{|1-z|^2}\notag \\
			&=\dfrac{1-2\beta {\rho}_0 \cos t- (1-2\beta)({\rho}_0) ^2}{1+({\rho}_0)^2-2{\rho}_0 \cos t} +i  \dfrac{2(1-\beta){\rho}_0 \sin t}{1+({\rho}_0)^2-2{\rho}_0 \cos t}.
		\end{align*}
By taking
		\[u(t)= \dfrac{1-2\beta {\rho}_0 \cos t- (1-2\beta)({\rho}_0) ^2}{1+({\rho}_0)^2-2{\rho}_0 \cos t} \quad \text{and}\quad   v(t)=\dfrac{2(1-\beta){\rho}_0 \sin t}{1+({\rho}_0)^2-2{\rho}_0 \cos t},\]
it is readily seen that
		\begin{align*}
			((u-1)^2+v^2)^2 &=\dfrac{16({\rho}_0)^4 (1-\beta)^4}{(1+({\rho}_0)^2-2{\rho}_0 \cos t)^2},
\intertext{and}
			\left(\dfrac{u-1}{1-\alpha}\right)^2+ \left(\dfrac{v}{1+\alpha}\right)^2 &=\dfrac{4({\rho}_0)^2(1-\beta)^2 \left[ ({\rho}_0-\cos t)^2(1+\alpha)^2 + (\sin t)^2 (1+\alpha)^2 \right]}{(1+({\rho}_0)^2-2{\rho}_0 \cos t)^2 (1-\alpha)^2 (1+\alpha)^2}.
		\end{align*}

From \eqref{eqn36}, we seek to find a $t$ satisfying the equation
		\[4({\rho}_0)^2 (1-\beta)^2 =  \dfrac{({\rho}_0 - \cos t)^2}{(1-\alpha)^2} + \dfrac{(\sin t)^2}{(1+\alpha)^2} .\]
Replacing the value ${\rho}_0$ and writing $x=\cos t$ yields
\begin{align*} & \dfrac{16\alpha (1-\beta)^2}{(1+\alpha)^2 (1+16\alpha (1-\beta)^2)}-\dfrac{1-x^2}{(1+\alpha)^2}\\
& \quad {}-\dfrac{1}{(1-\alpha)^2}\left(x-\dfrac{2\sqrt{\alpha}}{(1+\alpha)\sqrt{1+16\alpha (1-\beta)^2}}\right)^2=0,
\end{align*}
or equivalently, the equation
\begin{equation}\label{eqn38}
4\alpha\left(1 +16\alpha   (1-\beta)^2\right)x^2 -4  (1+\alpha)\sqrt{\alpha (1+16\alpha (1-\beta)^2)}x+(1+\alpha)^2=0.
		\end{equation}

Clearly, the  number
\begin{align*}
x_0 &=\dfrac{1+\alpha}{2\sqrt{\alpha (1+16\alpha (1-\beta)^2)}}
\end{align*}
is the positive double real root of \eqref{eqn38}.
Since 	$ 0<\beta\leq \beta_0$, a  computation shows that $x_0<1$.
With $t=\arccos x_0$ and $z=\rho_0\mathit{e} ^{i  t}$, the point  $zk_\beta'(z)/k_\beta(z)$ lies on $\partial G_{\alpha}(\mathbb{D})$. This proves sharpness for ${\rho}_0$.

Case (ii). Let $  \beta_0\leq \beta<1$. Here $\tilde{\rho}_0\geq \rho_1$ and
		\[ a_f(\tilde{\rho}_0)\geq a_f(\rho_1) =1+\frac{4\alpha }{(1-\alpha)(1+6\alpha+\alpha^2)}.\]
From Lemma~\ref{ralemma}, it follows that $\mathbb{D}(a_f(\tilde{\rho}_0); c_f(\tilde{\rho}_0))\subset G_\alpha(\mathbb{D})$ showing that the $\mathcal{BS}(\alpha)$-radius for the class of starlike functions of order $\beta$ is at least $\tilde{\rho}_0$.
		
		To show sharpness of $\tilde{\rho}_0$,  consider again the generalized Koebe function $k_\beta$ given by \eqref{extremal}.  We shall find a point on $|z|=\tilde{\rho}_0$ such that it is mapped to a point on $\partial G_\alpha(\mathbb{D})$. Evidently,
		\[\dfrac{zk_\beta'(z)}{k_\beta(z)}=1+2(1-\beta)\frac{z}{1-z}.\]
		Since
		\[  \frac{\tilde{\rho}_0}{1-\tilde{\rho}_0} =\frac{1}{2(1-\alpha)(1-\beta)},   \]
evaluating at $z=\tilde{\rho}_0$ gives
		\[\dfrac{zk_\beta'(z)}{k_\beta(z)}= 1+\dfrac{1}{1-\alpha}=\ G_{\alpha} (1)\in \partial G_\alpha(\mathbb{D}). \] This proves sharpness of  $\tilde{\rho}_0$.
%
%
\end{proof}
	
The condition \eqref{eqn31} suggests that the $\mathcal{BS}(\alpha)$-radius in the case $\alpha=0$ is $1/(3-2\beta)$. That this is indeed the case follows easily from Lemma~\ref{ralemma}.

\begin{theorem}\label{th2}
Let $0\leq \beta <1$. The  Booth lemniscate starlikeness radius (of order 0)  for the class of starlike functions of order $\beta$ is  $1/(3-2\beta)$.
	\end{theorem}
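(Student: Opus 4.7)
The plan is to specialize the machinery used in Theorem~\ref{th1} to the degenerate case $\alpha=0$, where $G_0(z)=1+z$ maps $\mathbb{D}$ onto the open disc $\mathbb{D}(1;1)$. Thus the $\mathcal{BS}(0)$-radius is simply the largest $r$ for which the disc estimate for $zf'(z)/f(z)$ on $|z|\le r$ for $f\in\mathcal{S}^*(\beta)$ sits inside $\mathbb{D}(1;1)$.

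First, I would recall the sharp inclusion from the proof of Theorem~\ref{th1}: for $f\in\mathcal{S}^*(\beta)$ and $|z|\le r<1$,
\[
\frac{zf'(z)}{f(z)}\in\mathbb{D}\bigl(a_f(r);c_f(r)\bigr),\qquad a_f(r)=\frac{1+(1-2\beta)r^2}{1-r^2},\quad c_f(r)=\frac{2(1-\beta)r}{1-r^2}.
\]
Since $a_f(r)\ge 1$, Lemma~\ref{ralemma} with $\alpha=0$ gives $r_{a_f(r)}=2-a_f(r)$, and hence the containment $\mathbb{D}(a_f(r);c_f(r))\subseteq G_0(\mathbb{D})$ reduces to the inequality $c_f(r)\le 2-a_f(r)$.

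Next I would solve this inequality explicitly. Clearing $1-r^2$, the condition becomes
\[
2(1-\beta)r\le 1-(3-2\beta)r^2,\qquad\text{i.e.,}\qquad (3-2\beta)r^2+2(1-\beta)r-1\le 0.
\]
The discriminant is $4(1-\beta)^2+4(3-2\beta)=4(2-\beta)^2$, so the positive root is
\[
r=\frac{-(1-\beta)+(2-\beta)}{3-2\beta}=\frac{1}{3-2\beta},
\]
giving the lower bound $R_{\mathcal{BS}(0)}(\mathcal{S}^*(\beta))\ge 1/(3-2\beta)$.

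Finally I would verify sharpness using the generalized Koebe function $k_\beta$ of \eqref{extremal}. A direct evaluation at the real point $z_0=1/(3-2\beta)$ gives
\[
\frac{z_0 k_\beta'(z_0)}{k_\beta(z_0)}=\frac{1+(1-2\beta)z_0}{1-z_0}=2,
\]
and since $|2-1|=1$, this value lies on $\partial\mathbb{D}(1;1)=\partial G_0(\mathbb{D})$, proving that the radius $1/(3-2\beta)$ cannot be improved. There is no real obstacle here: the problem is genuinely easier than Theorem~\ref{th1} because $G_0(\mathbb{D})$ is already a round disc, so the case analysis of Lemma~\ref{ralemma} collapses to a single case and the extremal point on the Koebe function is automatically real. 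The only mild calculation is checking that the discriminant is a perfect square, which it is.
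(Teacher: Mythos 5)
Your proof is correct and is exactly the argument the paper has in mind: the paper merely remarks that the result "follows easily from Lemma~\ref{ralemma}" (whose $\alpha=0$ case gives $r_a=2-a$ for $a\ge 1$), and your computation — reducing the containment to $(3-2\beta)r^2+2(1-\beta)r-1\le 0$ and checking sharpness via $k_\beta$ at the real point $1/(3-2\beta)$, where $zk_\beta'(z)/k_\beta(z)=2\in\partial G_0(\mathbb{D})$ — supplies precisely the omitted details. No gaps.
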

	
Theorem~\ref{th1} and Theorem~\ref{th2} also readily yield the following results for  starlike and convex  functions.

\begin{corollary}
Let $0\leq \alpha<1$. The Booth lemniscate starlikeness radius of order $\alpha$  for the class $\mathcal{S}^{*}$ of starlike functions  is

		\begin{align}\label{eqn31b}
			R_{\mathcal{BS}(\alpha)}(\mathcal{S}^{*})
			&=\begin{dcases}	
				\dfrac{1}{3-2\alpha }, &  0\leq \alpha\leq \frac{1}{9},\\			
\dfrac{2\sqrt{\alpha}}{(1+\alpha)\sqrt{ 1+16\alpha }},
				&  \frac{1}{9}\leq\alpha \leq 1.
			\end{dcases}
		\end{align}
\end{corollary}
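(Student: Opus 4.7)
The plan is to deduce this corollary directly from Theorem~\ref{th1} (for $0 < \alpha < 1$) and Theorem~\ref{th2} (for $\alpha = 0$) by specializing to $\beta = 0$, since $\mathcal{S}^* = \mathcal{S}^*(0)$. The case $\alpha = 0$ is immediate: Theorem~\ref{th2} with $\beta = 0$ gives $R_{\mathcal{BS}(0)}(\mathcal{S}^*) = 1/3$, which coincides with $1/(3 - 2\alpha)$ at $\alpha = 0$, confirming the first branch of \eqref{eqn31b} at that endpoint.

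For $0 < \alpha < 1$, I would set $\beta = 0$ in the piecewise formula \eqref{eqn31}. The dividing line in Theorem~\ref{th1} is $\max\{0, (9\alpha-1)/(8\alpha)\}$, and first I would note that this quantity equals $0$ precisely when $\alpha \leq 1/9$, and equals $(9\alpha - 1)/(8\alpha) > 0$ when $\alpha > 1/9$. Consequently, when $\alpha \leq 1/9$ the condition $\max\{0, (9\alpha-1)/(8\alpha)\} \leq \beta < 1$ holds trivially at $\beta = 0$, so the second branch of \eqref{eqn31} applies and yields
\[ R_{\mathcal{BS}(\alpha)}(\mathcal{S}^*) = \frac{1}{1 + 2(1-\alpha)} = \frac{1}{3 - 2\alpha}. \]
When $\alpha > 1/9$, the opposite condition $0 \leq \beta < \max\{0, (9\alpha-1)/(8\alpha)\}$ is satisfied at $\beta = 0$, and the first branch gives
\[ R_{\mathcal{BS}(\alpha)}(\mathcal{S}^*) = \frac{2\sqrt{\alpha}}{(1+\alpha)\sqrt{1 + 16\alpha}}. \]

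There is essentially no obstacle, since the corollary is a routine substitution. The only point worth a remark is the consistency check at the overlap $\alpha = 1/9$: a short computation shows both formulas deliver the common value $9/25$, which is also consistent with the equality $\rho_0 = \tilde{\rho}_0$ at $\beta = \beta_0$ observed in the proof of Theorem~\ref{th1}. Sharpness does not need a separate argument because the Koebe function $k(z) = z/(1-z)^2$ is precisely $k_0(z)$ from \eqref{extremal}, which was already shown to be extremal in Theorem~\ref{th1}.
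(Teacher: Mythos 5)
Your proposal is correct and is exactly the argument the paper intends: the corollary is stated as "readily" following from Theorems~\ref{th1} and~\ref{th2}, and your substitution $\beta=0$, the case split at $\alpha=1/9$, the consistency check giving $9/25$ at the overlap, and the appeal to $k_0$ for sharpness all match the paper's (unwritten but implied) reasoning.
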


\begin{corollary}
Let $0\leq \alpha<1$. The Booth lemniscate starlikeness radius of order $\alpha$  for the class $\mathcal{K}$ of convex functions  is

		\begin{align*}\label{eqn31c}
			R_{\mathcal{BS}(\alpha)}(\mathcal{K})
			&=\begin{dcases}	
				\dfrac{1}{2-\alpha }, & 0\leq \alpha\leq \frac{1}{5},\\			
\dfrac{2\sqrt{\alpha}}{(1+\alpha)\sqrt{  1+4\alpha   }},
				&   \frac{1}{5}\leq\alpha \leq 1.
			\end{dcases}
		\end{align*}
\end{corollary}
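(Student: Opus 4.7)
The plan is to deduce this corollary from Theorem~\ref{th1} together with the classical Marx--Strohh\"acker inclusion $\mathcal{K}\subseteq \mathcal{S}^{*}(1/2)$. That inclusion immediately gives $R_{\mathcal{BS}(\alpha)}(\mathcal{K})\geq R_{\mathcal{BS}(\alpha)}(\mathcal{S}^{*}(1/2))$, so the lower bound is obtained by specializing the two branches of \eqref{eqn31} to $\beta=1/2$.

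At $\beta=1/2$, the threshold $\beta_{0}=(9\alpha-1)/(8\alpha)$ equals $1/2$ precisely when $\alpha=1/5$. Consequently, $\beta=1/2$ falls in the first branch $0\leq \beta<\max\{0,\beta_{0}\}$ exactly when $\alpha>1/5$, and in the second branch $\max\{0,\beta_{0}\}\leq \beta<1$ when $\alpha\leq 1/5$. A direct substitution then yields
\[
\rho_{0}=\frac{2\sqrt{\alpha}}{(1+\alpha)\sqrt{1+4\alpha}},\qquad \tilde{\rho}_{0}=\frac{1}{2-\alpha},
\]
which matches the claimed piecewise formula. The endpoint $\alpha=0$ is not covered by Theorem~\ref{th1}, but Theorem~\ref{th2} fills this gap: $1/(3-2\beta)\big|_{\beta=1/2}=1/2=1/(2-\alpha)\big|_{\alpha=0}$.

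Sharpness comes essentially for free: the extremal function from Theorem~\ref{th1} at $\beta=1/2$ is $k_{1/2}(z)=z/(1-z)$, and a direct computation gives $1+zk_{1/2}''(z)/k_{1/2}'(z)=(1+z)/(1-z)$, which has positive real part, so $k_{1/2}\in \mathcal{K}$. The sharpness argument already executed in the proof of Theorem~\ref{th1} therefore produces, at the claimed radius, a point on $|z|=R_{\mathcal{BS}(\alpha)}(\mathcal{K})$ whose image under $zk_{1/2}'(z)/k_{1/2}(z)$ lies on $\partial G_{\alpha}(\mathbb{D})$, and $k_{1/2}\in \mathcal{K}$ prevents any larger radius.

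I do not foresee a genuine obstacle; the only nontrivial input is the Marx--Strohh\"acker containment, and the remainder is arithmetic specialization. The only care needed is to verify that translating the hypothesis on $\beta$ in Theorem~\ref{th1} into a hypothesis on $\alpha$ after setting $\beta=1/2$ produces exactly the intervals $[0,1/5]$ and $[1/5,1)$ stated in the corollary, which follows from the elementary identity $\beta_{0}\lessgtr 1/2\iff \alpha\lessgtr 1/5$.
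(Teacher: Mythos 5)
Your proposal is correct and follows essentially the same route as the paper: the paper's proof likewise invokes the inclusion $\mathcal{K}\subseteq\mathcal{S}^*(1/2)$, specializes Theorem~\ref{th1} at $\beta=1/2$ (where the threshold $\beta_0=1/2$ corresponds exactly to $\alpha=1/5$), and notes that the extremal function $k_{1/2}(z)=z/(1-z)$ is itself convex, so the radius is sharp. Your additional remarks on the $\alpha=0$ endpoint and the branch bookkeeping are consistent with, and slightly more explicit than, the paper's argument.
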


\begin{proof} Every convex function is also starlike of order $1/2$. Thus the $\mathcal{BS}(\alpha)$-radius is at least as big as that given by Lemma~\ref{ralemma} with $\beta=1/2$. However, the extremal starlike function $k_{1/2}$  given by \eqref{extremal} is also convex, whence the result.
\end{proof}

Next let $1<\beta<4/3$, and $M(\beta)$ be the class consisting of functions $f\in \mathcal{A}$ for
which $\RE (zf'(z)/f(z))<\beta$. This class was introduced by  Uralegaddi et al. \cite{ural} who investigated functions in the class with positive coefficients. The following result gives the $\mathcal{BS}(\alpha)$-radius for the class $M(\beta)$.

\begin{theorem}\label{th3}
Let $0< \alpha< 1$ and $1< \beta <4/3$. The Booth lemniscate starlikeness radius of order $\alpha$ for the class $M(\beta)$ is
\begin{align*}
			R_{\mathcal{BS}(\alpha)}(M(\beta)) &=\begin{dcases}	
				\dfrac{1}{1+2(1-\alpha)( \beta-1)}, & 1< \beta\leq 1+ \dfrac{1-\alpha}{8\alpha},	\\		
\dfrac{2\sqrt{\alpha}}{(1+\alpha)\sqrt{ 1+16\alpha ( \beta-1)^2}},
				&    1+ \dfrac{1-\alpha}{8\alpha}\leq  \beta< \frac{4}{3}.
			\end{dcases}
		\end{align*}
\end{theorem}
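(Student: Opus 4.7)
The plan is to mirror the proof of Theorem \ref{th1}, with the key asymmetry that here $\RE(zf'(z)/f(z)) < \beta$ with $\beta > 1$, so the disc containing $zf'(z)/f(z)$ sits to the \emph{left} of $w=1$ rather than the right, and the $a<1$ branches of Lemma \ref{ralemma} are the relevant ones.

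First, I would record the disc bound for $M(\beta)$. Writing $p(z) = (\beta - zf'(z)/f(z))/(\beta-1)$, the hypothesis $\RE(zf'(z)/f(z)) < \beta$ makes $p$ a Carath\'eodory function with $p(0)=1$, so the classical bound $|p(z)-(1+r^2)/(1-r^2)| \leq 2r/(1-r^2)$ applied to $p$ and then translated back to $zf'(z)/f(z)$ yields
$$\left| \dfrac{zf'(z)}{f(z)} - \dfrac{1-(2\beta-1)r^2}{1-r^2} \right| \leq \dfrac{2(\beta-1)r}{1-r^2}, \qquad |z|\leq r<1.$$
So $zf'(z)/f(z) \in \mathbb{D}(a_f(r); c_f(r))$ with
$$a_f(r) = \dfrac{1-(2\beta-1)r^2}{1-r^2}, \qquad c_f(r) = \dfrac{2(\beta-1)r}{1-r^2},$$
and since $\beta > 1$ one has $a_f(r)<1$ throughout.

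Next, I apply Lemma \ref{ralemma} on the branches $a<1$. The inclusion $\mathbb{D}(a_f(r); c_f(r)) \subset G_\alpha(\mathbb{D})$ reduces to $c_f(r) \leq r_{a_f(r)}$, and Lemma \ref{ralemma} gives two formulas depending on whether $a_f(r)$ lies below or above $1-4\alpha/((1-\alpha)(1+6\alpha+\alpha^2))$. Solving the two boundary equations
$$c_f(r) = a_f(r) - 1 + \dfrac{1}{1-\alpha} \qquad \text{and} \qquad c_f(r) = \sqrt{s(\alpha, a_f(r))}$$
yields, respectively, the candidates
$$\tilde{\rho}_0 = \dfrac{1}{1+2(1-\alpha)(\beta-1)}, \qquad \rho_0 = \dfrac{2\sqrt{\alpha}}{(1+\alpha)\sqrt{1+16\alpha(\beta-1)^2}},$$
i.e.\ the same expressions as in Theorem \ref{th1} but with $1-\beta$ replaced by $\beta-1$.

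The case split is governed by when $\rho_0 = \tilde{\rho}_0$. Setting the two equal and clearing denominators produces the same quadratic in $\beta-1$ as the one giving $\beta_0$ in Theorem \ref{th1}, and a short computation locates its unique positive root at $\beta-1 = (1-\alpha)/(8\alpha)$. I then verify, exactly as in Cases (i)--(ii) of Theorem \ref{th1}, that for $1 < \beta \leq 1+(1-\alpha)/(8\alpha)$ the radius is $\tilde{\rho}_0$ (tangency to the leftmost boundary point of $G_\alpha(\mathbb{D})$), while for $1+(1-\alpha)/(8\alpha) \leq \beta < 4/3$ the radius is $\rho_0$ (interior tangency).

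For sharpness, the extremal function is $k(z) = z(1-z)^{2\beta-2}$, which satisfies $zk'(z)/k(z) = (1-(2\beta-1)z)/(1-z)$. At $z = \tilde{\rho}_0 > 0$, a direct substitution gives $zk'(z)/k(z) = -\alpha/(1-\alpha) = G_\alpha(-1) \in \partial G_\alpha(\mathbb{D})$, establishing sharpness of $\tilde{\rho}_0$. For $\rho_0$, one repeats the argument of Case (i) in Theorem \ref{th1}: write the boundary equation $((u-1)^2+v^2)^2 = ((u-1)/(1-\alpha))^2 + (v/(1+\alpha))^2$ at $z = \rho_0 e^{it}$, substitute $\rho_0$, and check that the resulting quadratic in $x=\cos t$ (the exact analogue of \eqref{eqn38}) has a double real root in $(-1,1)$. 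The main obstacle is purely bookkeeping: tracking the mirror-image signs relative to Theorem \ref{th1} (so that for example $a_f(r)<1$ calls the ``leftmost'' branch of Lemma \ref{ralemma}, and the tangent point on $\partial G_\alpha(\mathbb{D})$ is $G_\alpha(-1)$ rather than $G_\alpha(1)$); no new technical ingredients are needed.
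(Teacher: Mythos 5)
Your proposal is correct and follows essentially the same route as the paper: the same disc $\mathbb{D}(a_f(r);c_f(r))$ with decreasing center $a_f(r)\leq 1$, the same two candidate radii obtained from the two $a<1$ branches of Lemma~\ref{ralemma}, the same case split at $\beta=1+(1-\alpha)/(8\alpha)$, and the same extremal function $k_\beta(z)=z(1-z)^{2\beta-2}$ with tangency at $G_\alpha(-1)$. The only (harmless) differences are that you derive the disc bound via a Carath\'eodory function instead of citing it, and your labels $\rho_0$ and $\tilde{\rho}_0$ are swapped relative to the paper's.
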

\begin{proof} Every function $f\in M(\beta)$ satisfies the inequality
		\begin{equation*}
			\left|\dfrac{zf'(z)}{f(z)}-\dfrac{1+(1-2\beta)r^2}{1-r^2}\right| \leq \dfrac{2( \beta-1)r}{1-r^2}, \quad |z|\leq r<1.
		\end{equation*}
Define $a_f$   and $c_f$   by
\[ a_f(r):=\dfrac{1+(1-2\beta)r^2}{1-r^2}\quad \text{ and }\quad c_f(r):=\dfrac{2( \beta-1)r}{1-r^2}.\]
As $\beta>1$, it follows that  $a_f$  is decreasing, whence $a_f(r)\leq 1$ for all $0\leq r<1$. Recall that this function was increasing in the case of starlike functions of order $\beta$. Since $a_f(r)\leq 1$, Lemma \ref{ralemma} shows that
the disc $\mathbb{D}(a_f(r);c_f(r))\subset G_\alpha(\mathbb{D})$ provided
\begin{align}\label{eq23}
	c_f(r)&=\begin{dcases}
		\sqrt{s(\alpha,a)}, &   a_f(r)>1-\frac{4\alpha }{(1-\alpha)(1+6\alpha+\alpha^2)},\\
		a_f(r)-1+\dfrac{1}{1-\alpha},  &
		1-\frac{4\alpha }{(1-\alpha)(1+6\alpha+\alpha^2)}\geq a_f(r),
	\end{dcases}
\end{align}
where $s(\alpha,a_f(r))$ is given by \eqref{saf}.

Let \begin{alignat*}{3}
	\rho_0:= \dfrac{1}{1+2(1-\alpha)( \beta-1)}
	&  \quad\text{ and }\quad &
	\tilde{\rho}_0:=
	\dfrac{2\sqrt{\alpha}}{(1+\alpha)\sqrt{ (1+16\alpha ( \beta-1)^2)}}.
\end{alignat*}
Then, $\rho_0$  satisfies the equation
 \begin{align*}
 	 c_f(r)& =a_f(r)-1+\dfrac{1}{1-\alpha},
 \end{align*}
while $\tilde{\rho_0}$ is the solution of the equation
\[c_f(r)^2  = s(\alpha,a_f(r)).\]

Also, \[\rho_1=\dfrac{\sqrt{4\alpha}}{\sqrt{4\alpha+(2\beta-2)(1-\alpha)(1+6\alpha+\alpha ^2)}}\]
is the positive root of the equation\[a_f(r)=1-\dfrac{4\alpha}{(1-\alpha)(1+6\alpha+\alpha ^2)}.\]

Evidently, $\rho_1\leq \rho_0$ holds if and only if
\[\beta\leq 1+\dfrac{1-\alpha}{8\alpha}.\]

Case (i): $1<\beta\leq 1+((1-\alpha)/8\alpha)$. Here $\rho_1\leq \rho_0$, and because the center $a_f(r)$ is decreasing, then
\[a_f(\rho_0)\leq a_f(\rho_1)=1-\dfrac{4\alpha}{(1-\alpha)(1+6\alpha+\alpha ^2)}.\]
Thus, it follows from \eqref{eq23} that $\mathbb{D}(a_f(\rho_0);c_f(\rho_0))\subset G_\alpha(\mathbb{D})$ for every $f\in M(\beta)$, or the $\mathcal{BS}(\alpha)$-radius for $M(\beta)$ is at least $\rho_0$.

Case (ii): $1+((1-\alpha)/8\alpha)\leq \beta <4/3$. In this case, $\rho_1\geq \rho_0$, and because the center $a_f(r)$ is decreasing, then \[a_f(\rho_0)\geq a_f(\rho_1)=1-\dfrac{4\alpha}{(1-\alpha)(1+6\alpha+\alpha ^2)}.\]
Thus, $\mathbb{D}(a_f(\tilde{\rho_0});c_f(\tilde{\rho_0}))\subset G_\alpha(\mathbb{D})$ from \eqref{eq23}.

To complete the proof, we observe that the function  $k_\beta$ given by $k_\beta(z)=z/(1-z)^{2-2\beta}$ shows that the radius in each case above is best possible.
\end{proof}

\section{Janowski starlike functions}

Let $-1\leq B <A\leq 1$. The class $\mathcal{S}^*[A,B]$ of Janowski starlike functions \cite{Jano} consists of $f\in\mathcal{A}$ satisfying the subordination $zf'(z)/f(z)\prec   (1+Az)/(1+Bz)$.  For judicious choices of $A$ and $B$, $\mathcal{S}^*[A,B]$ reduces to several widely studied subclasses of $\mathcal{A}$. For instance, the choice $\beta=(1-A)/2$ yields $\mathcal{S}^*[A,-1]=\mathcal{S}^*(\beta)$, the class which was studied in the previous section.

When $B\neq -1$, the image of  $zf'(z)/f(z)$ lies in a disc. Further, if $A$ and $B$ are close to 0, then this disc is small, and whence the class $\mathcal{S}^*[A,B]$  must be contained in the class $\mathcal{BS}(\alpha)$. This is the inclusion result given below.

\begin{theorem}\label{th7} Let $-1< B< A\leq 1$. The inclusion $\mathcal{S^{*}}[A,B]\subset \mathcal{BS}(\alpha)$ holds if  either
\begin{enumerate}
\item [(i)]
$(1-\alpha)(1+6\alpha+\alpha^2)|B|(A-B)\leq 4\alpha (1-B^2)$ and
$(1+\alpha)^2(4\alpha (A-B)^2 +B^2)\leq 4\alpha$,  or
\item [(ii)]
$(1-\alpha)(1+6\alpha+\alpha^2)|B|(A-B)\geq 4\alpha (1-B^2)$ and
  $(1-\alpha)(A-B)+|B|\leq 1$.
\end{enumerate}

\end{theorem}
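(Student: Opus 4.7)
The proof begins by recalling that, since $B \neq -1$, the M\"obius map $w(z) = (1+Az)/(1+Bz)$ sends $\mathbb{D}$ onto the open disc $\mathbb{D}(a_0; c_0)$, where
\begin{equation*}
a_0 = \frac{1-AB}{1-B^2}, \qquad c_0 = \frac{A-B}{1-B^2}.
\end{equation*}
Hence every $f \in \mathcal{S}^{*}[A,B]$ satisfies $zf'(z)/f(z) \in \mathbb{D}(a_0; c_0)$, and the inclusion $\mathcal{S}^{*}[A,B] \subset \mathcal{BS}(\alpha)$ reduces to the single geometric condition $\mathbb{D}(a_0; c_0) \subseteq G_\alpha(\mathbb{D})$. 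By Lemma~\ref{ralemma}, this is equivalent to $c_0 \leq r_{a_0}$; the entire argument amounts to verifying this inequality in each of the two cases, using the piecewise description of $r_{a_0}$.

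A direct computation gives $1 - a_0 = B(A-B)/(1-B^2)$, so $|1-a_0| = |B|(A-B)/(1-B^2)$. The first inequality in (i) then rearranges exactly to $|1-a_0| \leq 4\alpha/[(1-\alpha)(1+6\alpha+\alpha^2)]$, placing $a_0$ in the middle branch of Lemma~\ref{ralemma}, where $r_{a_0} = \sqrt{s(\alpha, a_0)}$. Substituting $(1-a_0)^2 = B^2 c_0^2$ and rearranging, the inequality $c_0^2 \leq s(\alpha, a_0)$ becomes
\begin{equation*}
2\alpha(1+\alpha)^2 c_0^2(1-B^2) - \alpha \;\leq\; \sqrt{\alpha^2 - \alpha(1-\alpha^2)^2 B^2 c_0^2}.
\end{equation*}
When the left-hand side is non-positive this is automatic; otherwise, squaring, cancelling $\alpha^2$, and dividing through by $\alpha c_0^2 (1+\alpha)^2$ collapses the resulting polynomial condition into $(1+\alpha)^2[4\alpha(A-B)^2 + B^2] \leq 4\alpha$, which is precisely the second hypothesis of (i). Since this same hypothesis trivially covers the ``automatic'' subcase, the two conditions in (i) together guarantee $c_0 \leq r_{a_0}$.

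For case (ii), the first hypothesis reverses the threshold above, so $a_0$ lies in an outer branch of Lemma~\ref{ralemma}. A brief sign analysis shows that whether $B > 0$ (so $a_0 < 1$ and $r_{a_0} = a_0 - 1 + 1/(1-\alpha)$) or $B < 0$ (so $a_0 > 1$ and $r_{a_0} = 1 - a_0 + 1/(1-\alpha)$), one uniformly has $r_{a_0} = 1/(1-\alpha) - |B|(A-B)/(1-B^2)$. Substituting $c_0 = (A-B)/(1-B^2)$ and using the factorisation $1 - B^2 = (1-|B|)(1+|B|)$, the inequality $c_0 \leq r_{a_0}$ reduces at once to $(1-\alpha)(A-B) + |B| \leq 1$, the second hypothesis of (ii).

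The main technical point is the algebraic reduction in case (i), namely the verification that squaring the inequality $c_0 \leq \sqrt{s(\alpha,a_0)}$ really does collapse into the compact symmetric form $(1+\alpha)^2[4\alpha(A-B)^2 + B^2] \leq 4\alpha$, together with the check that this same condition subsumes the subcase in which the rearranged left-hand side is non-positive, so that no additional hypothesis has to be imposed.
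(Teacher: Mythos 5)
Your proposal is correct and follows essentially the same route as the paper: both reduce the inclusion to the disc containment $\mathbb{D}(a_0;c_0)\subseteq G_\alpha(\mathbb{D})$ and verify $c_0\leq r_{a_0}$ using Lemma~\ref{ralemma}, with hypothesis (i) resp.\ (ii) placing $a_0$ in the middle resp.\ outer branch and the second inequality in each case being exactly $c_0\leq r_{a_0}$ there. The only difference is presentational: you treat $B>0$ and $B<0$ uniformly via $|B|$ and $|1-a_0|$ and carry out the squaring computation explicitly, whereas the paper works through $B<0$ and states that $B\geq 0$ is similar.
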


\begin{proof}
Every function $f\in \mathcal{S^{*}}[A,B]$ satisfies (see \cite{ravi})
\begin{equation}\label{eqn12}
\left|\dfrac{zf'(z)}{f(z)}-\dfrac{1-A B r^2}{1-B^2 r^2}\right| \leq \dfrac{(A-B)r}{1-B^2 r^2}, \quad |z|\leq r<1.
\end{equation}
This shows that $zf'(z)/f(z)\in 	\mathbb{D}(a_f ;c_f )$  where \[ a_f  = \dfrac{1-A B}{1-B^2} \quad \text{and}\quad c_f =\dfrac{A-B}{1-B^2}.\]

We first prove the result for $B<0$. Here note that $a_f > 1$.

Case (i). Assume that  $(1-\alpha)(1+6\alpha+\alpha^2)B(A-B)\leq 4\alpha (1-B^2)$ and
$(1+\alpha)^2(4\alpha (A-B)^2 +B^2)\leq 4\alpha$. The first inequality reduces to
\[a_f \leq 1+\frac{4\alpha }{(1-\alpha)(1+6\alpha+\alpha^2)},\]
and so the result will follow if
 \[c_f^2\leq \dfrac{\sqrt{\alpha [\alpha- (1-a_f)^2 (1-\alpha ^2)^2]}
		+ \alpha (1+2 (1+\alpha)^2(1-a_f)^2)}{2\alpha (1+\alpha)^2}.\]
The latter inequality is the statement of the second inequality  $(1+\alpha)^2(4\alpha (A-B)^2 +B^2)\leq 4\alpha$.

Case (ii). Assume that  $(1-\alpha)(1+6\alpha+\alpha^2)B(A-B)\geq 4\alpha (1-B^2)$ and
  $(1-\alpha)(A-B)-B\leq 1$.  Then
\[a_f \geq 1+\frac{4\alpha }{(1-\alpha)(1+6\alpha+\alpha^2)},\]
whence the result will follow if
 \[c_f \leq 1-a_f+\dfrac{1}{1-\alpha},\]
or equivalently, when  $(1-\alpha)(A-B)-B\leq 1$.

When $B\geq 0$, the center $a_f\leq 1$,  and the proof proceeds similarly as before, and is thus omitted.
\end{proof}

We next turn our attention when the conditions in Theorem~\ref{th7} fail to hold. In this case, we seek the $\mathcal{BS}(\alpha)$-radius for the class $\mathcal{S^{*}}[A,B]$. The following result is also an extension of Theorem~\ref{th1}.

\begin{theorem}\label{th7a}
Let $0<\alpha <1$, $-1<B\leq 0$ and $B<A\leq 1$. If  neither condition (i) nor (ii) of Theorem~\ref{th7} holds, then the Booth lemniscate starlikeness radius of order $\alpha$ for the class $\mathcal{S^{*}}[A,B]$ is
	\begin{align*}
		R_{\mathcal{BS}(\alpha)}(\mathcal{S^{*}}[A,B])
		&=\begin{dcases}		
			\min \left\{1,\dfrac{2\sqrt{\alpha}}{(1+\alpha)\sqrt{4\alpha (A-B)^2 +B^2}}\right\},
			&    4A \alpha \geq  (5\alpha-1)B, \\
			\min \left\{1,\dfrac{1}{(1-\alpha)(A-B)-B}\right\}, & 4A \alpha\leq  (5\alpha-1)B.
		\end{dcases}
	\end{align*}
\end{theorem}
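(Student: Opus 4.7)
The strategy parallels the proof of Theorem~\ref{th1}, but now starting from the Janowski disc estimate \eqref{eqn12}. Every $f \in \mathcal{S}^*[A,B]$ satisfies $zf'(z)/f(z) \in \mathbb{D}(a_f(r); c_f(r))$ with $a_f(r) = (1-ABr^2)/(1-B^2 r^2)$ and $c_f(r) = (A-B)r/(1-B^2 r^2)$. Because $B \leq 0$ and $A > B$, we have $-AB \geq 0$, so $a_f(r) \geq 1$ and $a_f$ is strictly increasing in $r$. Lemma~\ref{ralemma} then translates the containment $\mathbb{D}(a_f(r); c_f(r)) \subseteq G_\alpha(\mathbb{D})$ into the criterion $c_f(r) \leq \sqrt{s(\alpha, a_f(r))}$ when $a_f(r) \leq 1 + \kappa$ (with $\kappa := 4\alpha/[(1-\alpha)(1+6\alpha+\alpha^2)]$), or $c_f(r) \leq 1 - a_f(r) + 1/(1-\alpha)$ when $a_f(r) \geq 1 + \kappa$.

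\textbf{Critical radii.} The candidate radii $\rho_0$ and $\tilde{\rho}_0$ arise as equality roots of these two criteria, and a threshold $\rho_1$ records where the regime switches. For $\tilde{\rho}_0$: the factorization $1 - B^2 r^2 = (1-Br)(1+Br)$ lets $c_f(r) = 1 - a_f(r) + 1/(1-\alpha)$ telescope to $(1-\alpha)(A-B)r = 1 + Br$, yielding $\tilde{\rho}_0 = 1/[(1-\alpha)(A-B) - B]$. For $\rho_0$: using $1 - a_f(r) = Br^2(A-B)/(1-B^2r^2)$ and the identity $c_f(r)^2 - (1-a_f(r))^2 = (A-B)^2 r^2/(1-B^2 r^2)$, the equation $c_f(r)^2 = s(\alpha, a_f(r))$, after squaring away the surd and collecting like terms via $(1-\alpha)^2 + 4\alpha = (1+\alpha)^2$, collapses to $(1+\alpha)^2 r^2[4\alpha(A-B)^2 + B^2] = 4\alpha$, giving $\rho_0 = 2\sqrt{\alpha}/[(1+\alpha)\sqrt{4\alpha(A-B)^2 + B^2}]$. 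Finally, $a_f(\rho_1) = 1 + \kappa$ yields an explicit $\rho_1$ (with $\rho_1 = \infty$ when $B = 0$).

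\textbf{Case split and sharpness.} Squaring $\rho_0 = \tilde{\rho}_0$ collapses to the perfect square $(4\alpha(A-B) + (1-\alpha)B)^2 = 0$, so $\rho_0 = \tilde{\rho}_0$ exactly when $4A\alpha = (5\alpha-1)B$; this line in $(A,B)$-space drives the case split. In the regime $4A\alpha \geq (5\alpha-1)B$ I would verify $\rho_0 \leq \rho_1$, so that criterion (a) is binding and the Booth lemniscate starlikeness radius equals $\min\{1, \rho_0\}$; in the regime $4A\alpha \leq (5\alpha-1)B$ the reverse holds and criterion (b) binds, giving $\min\{1, \tilde{\rho}_0\}$. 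Sharpness comes from the Janowski extremal $f_0$ with $zf_0'(z)/f_0(z) = (1+Az)/(1+Bz)$: at $r = \tilde{\rho}_0$ a direct substitution gives $(1+A\tilde{\rho}_0)/(1+B\tilde{\rho}_0) = 1 + 1/(1-\alpha) = G_\alpha(1) \in \partial G_\alpha(\mathbb{D})$, exactly as in the $\tilde{\rho}_0$ case of Theorem~\ref{th1}. For $\rho_0$, setting $G_\alpha(e^{it}) = u(t) + iv(t)$ and matching $(1+Az)/(1+Bz)$ at $z = \rho_0 e^{it}$ against the boundary equation $((u-1)^2+v^2)^2 = ((u-1)/(1-\alpha))^2 + (v/(1+\alpha))^2$ reduces, after separating real and imaginary parts, to a quadratic in $\cos t$ whose discriminant vanishes at $r = \rho_0$ (the same mechanism producing the perfect-square identity above), and the resulting double root supplies the desired boundary point.

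\textbf{Main obstacle.} The trickiest step will be ordering $\rho_0$ and $\tilde{\rho}_0$ against $\rho_1$ in each case: this is now a two-parameter comparison in $(A,B)$ rather than the single-parameter $\beta$-comparison of Theorem~\ref{th1}. The cleanest route is to mimic the production of inequality \eqref{eqn35}: rewrite $\rho_0 \leq \rho_1$ as a polynomial inequality that factors as a product of two linear forms in $A$, $B$, $\alpha$, and then verify the sign of each factor in the prescribed range, paying attention to the sign of $5\alpha - 1$, which changes how the defining condition $4A\alpha \gtrless (5\alpha-1)B$ interacts with the constraint $-1 < B \leq 0$.
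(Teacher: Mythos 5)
Your proposal is correct and follows essentially the same route as the paper's own (brief) proof: the same Janowski disc estimate, the same two containment criteria from Lemma~\ref{ralemma}, the same three critical radii with the case split governed by the threshold $4A\alpha = (5\alpha-1)B$ (your perfect-square identity $(4\alpha(A-B)+(1-\alpha)B)^2$ is exactly the computation behind the paper's condition \eqref{eq12a}), and the same extremal function $f(z)=z/(1+Bz)^{(B-A)/B}$. The only cosmetic difference is that your labels $\rho_0$ and $\tilde{\rho}_0$ are swapped relative to the paper's.
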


\begin{proof}
	The inequality \eqref{eqn12} gives $zf'(z)/f(z)\in 	\mathbb{D}(a_f(r);c_f(r))$, where
	\begin{equation*}
		a_f(r):=\dfrac{1-A B r^2}{1-B^2 r^2} \quad \text{ and }\quad c_f(r):= \dfrac{(A-B)r}{1-B^2 r^2}.
	\end{equation*}
The result follows easily for $B=0$, and so assume that $B<0$. Since
	\[a_f'(r)=-\frac{2B(B-A)r}{(1-B^2r^2)^2},\]
and $-1<B<0,$ it follows that $a_f$ is increasing with $a_f(r)\geq 1$ for $0\leq r<1$. Only a brief outline of the proof will be given here because the proof is similar to Theorem~\ref{th1}.

The numbers
	\begin{align*}
		\tilde{\rho_0}&= \dfrac{2\sqrt{\alpha}}{(1+\alpha)\sqrt{4\alpha (A-B)^2 +B^2}}\quad
		\text{and}\quad
		\rho_0 = \dfrac{1}{(1-\alpha)(A-B)-B}
	\end{align*}
	satisfy respectively the   equations
\[c_f(r)^2  = s(\alpha,a_f(r))\]
with $s(\alpha,a_f(r))$ given by \eqref{saf}, and
	\[	c_f(r)  =1-a_f(r)+\dfrac{1}{1-\alpha}.\]

Also, the number \[\rho_1=\dfrac{\sqrt{4\alpha}}{\sqrt{4\alpha B^2+(1-\alpha)(1+6\alpha+6\alpha^2)(B^2-AB)}}\]
	is the solution to the equation \[a_f(r)=1+\dfrac{4\alpha}{(1-\alpha)(1+6\alpha+\alpha^2)}.\]
	Here, the condition $\rho_1\leq \rho_0$	holds if and only if
	\begin{align}\label{eq12a}
		A &\leq \left(1-\dfrac{1-\alpha}{4\alpha}\right)B.
	\end{align}
	Thus $a_f(\rho_1)\leq a_f(\rho_0)$ if and only if \eqref{eq12a} holds.
	The result follows by an application of Lemma~\ref{ralemma} and is sharp for the function $f\in \mathcal{S}^*[A,B]$ given by $f(z)=z/(1+B z)^{(B-A)/B}$ for $B\neq 0$, while $f(z)=ze^{Az}$ for $B=0$.
\end{proof}

The result in the case $B>0$ is similar, which we state without proof.

\begin{theorem} Let $0<\alpha <1$, and $0<B<A\leq 1$. If  neither condition (i) nor (ii) of Theorem~\ref{th7} holds, then the Booth lemniscate starlikeness radius of order $\alpha$ for the class $\mathcal{S^{*}}[A,B]$ is
	\begin{align*}
		R_{\mathcal{BS}(\alpha)}(\mathcal{S^{*}}[A,B])
		&=\begin{dcases}		
			\min \left\{1,\dfrac{2\sqrt{\alpha}}{(1+\alpha)\sqrt{4\alpha (A-B)^2 +B^2}}\right\},
			&  4A \alpha \geq  (3\alpha+1)B, \\
			\min \left\{1,\dfrac{1}{(1-\alpha)(A-B)+B}\right\}, &  4A \alpha\leq  (3\alpha+1)B  .
		\end{dcases}
	\end{align*}
\end{theorem}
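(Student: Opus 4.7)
My plan is to follow the template of Theorem~\ref{th7a} almost verbatim, with the key structural change that for $0<B<A\leq 1$ the center
\[ a_f(r)=\frac{1-ABr^2}{1-B^2r^2} \]
is now decreasing in $r$. A direct computation gives
\[ a_f'(r)=\frac{2Br(B-A)}{(1-B^2r^2)^2}<0, \]
so $a_f(r)\leq 1$ for $0\leq r<1$. Consequently Lemma~\ref{ralemma} must be invoked in its ``$a\leq 1$'' branch rather than its ``$a\geq 1$'' branch: the critical level of the center becomes $1-4\alpha/[(1-\alpha)(1+6\alpha+\alpha^2)]$, and the nearest boundary point of $G_\alpha(\mathbb{D})$ to $a$ (when $a$ is far enough to the left) is $G_\alpha(-1)=-\alpha/(1-\alpha)$. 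The situation therefore parallels exactly the one treated in Theorem~\ref{th3}.

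I would introduce three auxiliary radii. First, $\tilde\rho_0$ is the positive root of $c_f(r)^2=s(\alpha,a_f(r))$; substituting $a_f$ and $c_f$ and simplifying by the same algebra used to derive $\rho_0$ in the proof of Theorem~\ref{th1} yields
\[ \tilde\rho_0=\frac{2\sqrt\alpha}{(1+\alpha)\sqrt{4\alpha(A-B)^2+B^2}}. \]
Next, $\rho_0$ is the positive root of $c_f(r)=a_f(r)-1+1/(1-\alpha)$; after clearing denominators and cancelling a factor of $1+Br$ this collapses to the linear equation $(1-\alpha)(A-B)r=1-Br$, giving $\rho_0=1/[(1-\alpha)(A-B)+B]$. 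Finally, $\rho_1$ is the root of the equation $a_f(\rho_1)=1-4\alpha/[(1-\alpha)(1+6\alpha+\alpha^2)]$. The main algebraic verification is that
\[ \rho_1\leq\rho_0 \iff 4A\alpha\leq(3\alpha+1)B; \]
after squaring, clearing denominators, and cancelling the common factor $(1-\alpha)(A-B)$, this reduces to the elementary inequality $4\alpha(A-B)\leq(1-\alpha)B$.

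With these pieces in place, the argument splits into two cases exactly as in Theorem~\ref{th7a}. If $4A\alpha\leq(3\alpha+1)B$, then $\rho_1\leq\rho_0$ and the monotonicity of $a_f$ forces $a_f(\rho_0)\leq a_f(\rho_1)=1-4\alpha/[(1-\alpha)(1+6\alpha+\alpha^2)]$; Lemma~\ref{ralemma} then yields $\mathbb{D}(a_f(\rho_0);c_f(\rho_0))\subseteq G_\alpha(\mathbb{D})$ by the defining equation for $\rho_0$. Symmetrically, when $4A\alpha\geq(3\alpha+1)B$ the inclusion holds at $\tilde\rho_0$. Both bounds must be capped at $1$ when the formula exceeds $1$, accounting for the $\min\{1,\cdot\}$ in the statement.

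Sharpness is established with the Janowski extremal $f(z)=z(1+Bz)^{(A-B)/B}$, for which $zf'(z)/f(z)=(1+Az)/(1+Bz)$. In the first case, evaluating at $z=-\rho_0$ and using the identity $\rho_0=1/[A-\alpha(A-B)]$ gives $zf'(z)/f(z)=-\alpha/(1-\alpha)=G_\alpha(-1)\in\partial G_\alpha(\mathbb{D})$. In the second case, one repeats the Cartesian-coordinate tangency argument from Theorem~\ref{th1}, reducing \eqref{eqn36} to a quadratic in $\cos t$ with a positive double real root in $[0,1]$. I anticipate the chief obstacle to be simply controlling the algebra in the two root computations and in the comparison $\rho_1\leq\rho_0$; no genuinely new idea is required beyond those already deployed in Theorems~\ref{th1}, \ref{th3}, and \ref{th7a}.
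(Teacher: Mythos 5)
Your proposal is correct and follows exactly the route the paper intends: it is the proof of Theorem~\ref{th7a} (and of Theorem~\ref{th3}, where the center $a_f(r)$ is likewise decreasing) transposed to the branch $a\leq 1$ of Lemma~\ref{ralemma}, and the paper itself omits this proof precisely because it is ``similar.'' Your key computations check out — $\rho_0=1/[(1-\alpha)(A-B)+B]$ after cancelling $1+Br$, the reduction of $\rho_1\leq\rho_0$ to $4\alpha(A-B)\leq(1-\alpha)B$, i.e.\ $4A\alpha\leq(3\alpha+1)B$, and the sharpness evaluation $zf'(z)/f(z)=-\alpha/(1-\alpha)=G_\alpha(-1)$ at $z=-\rho_0$ — so nothing further is needed.
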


For $0<\beta\leq 1$, the class $S^*[\beta,-\beta]=:\mathcal{S}^*_\beta$  consists of functions $f\in \mathcal{A}$ satisfying the inequality
\[ \left|\frac{zf'(z)}{f(z)}-1\right| < \beta \left|\frac{zf'(z)}{f(z)}+1\right|. \]
Parvatham \cite{par} introduced this class in her studies on the Bernardi integral operator. The function $f(z)=z/(1-\beta z)^2$ belongs to the class $\mathcal{S}^*_\beta$. The $\mathcal{BS}(\alpha)$-radius for this class follows readily from Theorem~\ref{th7a}.

\begin{corollary}\label{cor9}
		For $0\leq \beta<1$,  the Booth lemniscate starlikeness radius of order $\alpha$  for the class $\mathcal{S}^*_\beta$  is
		\begin{align*}
			R_{\mathcal{BS}(\alpha)}(\mathcal{S}^*_\beta)
			&=\begin{dcases}	
				\min\left\{1, \dfrac{1}{\beta(3-2\alpha)  }\right\}, & \ 0\leq \alpha\leq \frac{1}{9},\\			
\min\left\{1,\dfrac{2\sqrt{\alpha}}{\beta (1+\alpha)\sqrt{ 1+16\alpha }} \right\},
				&   \frac{1}{9}\leq\alpha \leq 1.
			\end{dcases}
		\end{align*}
\end{corollary}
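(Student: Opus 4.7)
The class $\mathcal{S}^*_\beta$ coincides with $\mathcal{S}^*[A,B]$ when $A=\beta$ and $B=-\beta$, and since $-1<B\leq 0$ the setting of Theorem~\ref{th7a} applies. My plan is simply to substitute these parameter values throughout Theorem~\ref{th7a} and match the resulting dichotomy and formulas with those asserted in the corollary.

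First I would handle the cut-off. The condition $4A\alpha\geq (5\alpha-1)B$ becomes $4\alpha\beta\geq -(5\alpha-1)\beta$, and (for $\beta>0$) this reduces to $9\alpha\geq 1$, i.e.\ $\alpha\geq 1/9$, matching precisely the split at $\alpha=1/9$ in the corollary.

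Next I would carry out the substitutions in the two formulas of Theorem~\ref{th7a}. Using $A-B=2\beta$ and $B^{2}=\beta^{2}$,
\[ 4\alpha(A-B)^{2}+B^{2}=16\alpha\beta^{2}+\beta^{2}=\beta^{2}(1+16\alpha),\qquad (1-\alpha)(A-B)-B=\beta(3-2\alpha).\]
Plugging these into the respective formulas of Theorem~\ref{th7a} yields $2\sqrt{\alpha}/(\beta(1+\alpha)\sqrt{1+16\alpha})$ in the case $\alpha\geq 1/9$ and $1/(\beta(3-2\alpha))$ in the case $\alpha\leq 1/9$, which are exactly the two expressions inside the $\min\{1,\cdot\}$ in the corollary.

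The remaining issue is the $\min$ with $1$, which is bookkeeping for the possibility that the whole class already lies inside $\mathcal{BS}(\alpha)$: when the parameters $(A,B)=(\beta,-\beta)$ satisfy either condition (i) or (ii) of Theorem~\ref{th7}, the inclusion $\mathcal{S}^*_\beta\subset\mathcal{BS}(\alpha)$ gives a radius of $1$, while otherwise Theorem~\ref{th7a} gives the displayed formula. A direct algebraic check shows that the two substituted expressions above exceed $1$ precisely when the corresponding inclusion hypotheses in Theorem~\ref{th7} are satisfied, so the $\min$-with-$1$ convention splices the two regimes together cleanly. Sharpness in the non-inclusion regime is inherited from Theorem~\ref{th7a} via the extremal function $f(z)=z/(1-\beta z)^{2}$ identified just before the corollary. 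The only real obstacle is this last compatibility check between Theorem~\ref{th7} and the formulas of Theorem~\ref{th7a}, but since both sides are explicit algebraic expressions in $\alpha$ and $\beta$, it is routine rather than conceptual.
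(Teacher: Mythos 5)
Your proposal is correct and follows essentially the same route as the paper, which simply specializes Theorem~\ref{th7a} to $(A,B)=(\beta,-\beta)$; your substitutions $4\alpha(A-B)^2+B^2=\beta^2(1+16\alpha)$, $(1-\alpha)(A-B)-B=\beta(3-2\alpha)$, and the reduction of $4A\alpha\geq(5\alpha-1)B$ to $\alpha\geq 1/9$ all check out. Your extra remark about reconciling the $\min\{1,\cdot\}$ with the inclusion conditions of Theorem~\ref{th7} is a point the paper passes over silently, but it does not change the argument.
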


It is worthy to note that for $\beta=1$, Corollary~\ref{cor9} reduces to the one given by \eqref{eqn31b}.

For $0\leq \beta <1$, the class $\mathcal{S}^*[1-\beta,0]:=\mathcal{S}^*[\beta]$ consists of functions $f\in\mathcal{A}$ satisfying the inequality
\[\left|\frac{zf'(z)}{f(z)}-1\right| <1-\beta.\]
Clearly, $\mathcal{S}^*[\beta]\subset \mathcal{S}^*(\beta)$ and the function $f(z)=ze^{(1-\beta)z}$ belongs to the class $\mathcal{S}^*[\beta]$.  This class was introduced and studied by Fournier \cite{four}, and we state its Booth lemniscate starlikeness radius.
\begin{corollary}
		For $0\leq \beta<1$,  the Booth lemniscate starlikeness radius of order $\alpha$  for the class $\mathcal{S}^*[\beta]$  is
		\begin{align*}
			R_{\mathcal{BS}(\alpha)}(\mathcal{S}^*[\beta]) 	& =\min\left\{1; 		
\dfrac{1}{ (1+\alpha)(1-\beta)}\right\}.
		\end{align*}
In particular, $\mathcal{S}^*[\alpha/(1+\alpha)]\subset \mathcal{BS}(\alpha) $.
\end{corollary}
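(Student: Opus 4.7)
The plan is to specialize the setup of Theorem~\ref{th7a} to $A=1-\beta$ and $B=0$, which considerably simplifies matters because the disc enclosing $zf'(z)/f(z)$ then has center independent of $r$. Applying \eqref{eqn12} with these parameters gives, for $|z|\le r<1$,
\[
\left|\frac{zf'(z)}{f(z)}-1\right| \le (1-\beta)r,
\]
so $zf'(z)/f(z)\in\mathbb{D}(1,(1-\beta)r)$: a disc with fixed center $a_f\equiv 1$ and linearly growing radius $c_f(r)=(1-\beta)r$. This bypasses the moving-center case analysis needed in Theorems~\ref{th1} and \ref{th7a}.

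Next I would invoke the Cho--Kim--Sugawa inclusion $\mathbb{D}(1,1/(1+\alpha))\subseteq G_\alpha(\mathbb{D})\subseteq\mathbb{D}(1,1/(1-\alpha))$ recalled just before Lemma~\ref{ralemma}. Since the inner disc is the largest one centered at $1$ contained in $G_\alpha(\mathbb{D})$, we have $\mathbb{D}(1,(1-\beta)r)\subset G_\alpha(\mathbb{D})$ exactly when $(1-\beta)r\le 1/(1+\alpha)$, i.e.\ $r\le 1/((1+\alpha)(1-\beta))$. Intersecting with the trivial constraint $r\le 1$ yields the lower bound $R_{\mathcal{BS}(\alpha)}(\mathcal{S}^*[\beta])\ge\min\{1, 1/((1+\alpha)(1-\beta))\}$.

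For sharpness I would use the extremal $f(z)=ze^{(1-\beta)z}\in\mathcal{S}^*[\beta]$, so that $zf'(z)/f(z)=1+(1-\beta)z$. The minimum $|G_\alpha(e^{it})-1|=1/|1-\alpha e^{2it}|=1/(1+\alpha)$ is attained at $t=\pi/2$, with boundary point $G_\alpha(i)=1+i/(1+\alpha)\in\partial G_\alpha(\mathbb{D})$. Hence, in the nontrivial range where $r_0:=1/((1+\alpha)(1-\beta))\le 1$, evaluating at $z=ir_0$ gives $1+(1-\beta)(ir_0)=1+i/(1+\alpha)\in\partial G_\alpha(\mathbb{D})$, which forbids any enlargement of the radius.

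Finally, the ``in particular'' assertion is obtained by substituting $\beta=\alpha/(1+\alpha)$, yielding $(1+\alpha)(1-\beta)=1$ and hence radius $1$, so that $\mathcal{S}^*[\alpha/(1+\alpha)]\subset\mathcal{BS}(\alpha)$. There is no serious obstacle to this proof; the only subtle point is the direction-matching in the sharpness step, namely that the image circle $\{1+(1-\beta)r e^{i\theta}:\theta\in[0,2\pi)\}$ meets $\partial G_\alpha(\mathbb{D})$ precisely along the imaginary direction where the inscribed radius $1/(1+\alpha)$ is realized.
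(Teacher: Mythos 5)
Your argument is correct and is essentially the paper's: the corollary is obtained there by specializing Theorem~\ref{th7a} to $A=1-\beta$, $B=0$ (the case the paper says ``follows easily''), which is precisely your observation that the disc $\mathbb{D}(1;(1-\beta)r)$ has fixed centre so that Lemma~\ref{ralemma} at $a=1$ (equivalently the inclusion $\mathbb{D}(1;1/(1+\alpha))\subseteq G_\alpha(\mathbb{D})$) gives the bound, with the same extremal function $ze^{(1-\beta)z}$. The only cosmetic slip is the attribution of that disc inclusion, which in the paper is due to Cho, Kumar, Kumar and Ravichandran \cite{ChoBooth}, not Cho--Kim--Sugawa.
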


\end{document}